\theoremstyle{plain}
\declaretheorem[title=Theorem, parent=section]{theorem}
\declaretheorem[title=Lemma,sibling=theorem]{lemma}
\theoremstyle{definition}
\declaretheorem[title=Remark, numbered=no]{remark*}
\declaretheorem[title=Assumption, numbered=no]{assumption*}
\numberwithin{equation}{section}
\newcommand{\N}{\mathbb{N}}
\newcommand{\R}{\mathbb{R}}
\newcommand{\cB}{\mathcal{B}}
\newcommand{\cA}{\mathcal{A}}
\newcommand{\cQ}{\mathcal{Q}}
\newcommand{\cE}{\mathcal{E}}
\newcommand{\cK}{\mathcal{K}}
\newcommand{\cG}{\mathcal{G}}
\newcommand{\eps}{\varepsilon}
\newcommand{\1}{\mathbbm{1}}
\renewcommand{\d}{\textnormal{\,d}}
\newcommand{\average}{{\mathchoice {\kern1ex\vcenter{\hrule height.4pt
width 6pt depth0pt} \kern-9.7pt} {\kern1ex\vcenter{\hrule
height.4pt width 4.3pt depth0pt} \kern-7pt} {} {} }}
\newcommand{\dashint}{\average\int}
\begin{document}
\allowdisplaybreaks
\title{The Harnack inequality fails for nonlocal kinetic equations}
 
\author{Moritz Kassmann}
\author{Marvin Weidner}

\address{Fakult\"{a}t f\"{u}r Mathematik\\Universit\"{a}t Bielefeld\\Postfach 100131\\D-33501 Bielefeld}
\email{moritz.kassmann@uni-bielefeld.de}
\urladdr{www.math.uni-bielefeld.de/$\sim$kassmann}

\address{Departament de Matem\`atiques i Inform\`atica, Universitat de Barcelona, Gran Via de les Corts Catalanes 585, 08007 Barcelona, Spain}
\email{mweidner@ub.edu}
\urladdr{https://sites.google.com/view/marvinweidner/}

\thanks{Moritz Kassmann gratefully acknowledges financial support by the German Research Foundation (SFB 1283 - 317210226). Marvin Weidner has received funding from the European Research Council (ERC) under the Grant Agreement No 801867 and the Grant Agreement No 101123223 (SSNSD), and by the AEI project PID2021-125021NA-I00 (Spain).}

\keywords{nonlocal, fractional, kinetic, hypoelliptic, Boltzmann, Harnack}

\subjclass[2020]{47G20, 35B65, 35R09, 35H10, 31B05, 35Q20, 82C40}

\allowdisplaybreaks

\begin{abstract}
We prove that the Harnack inequality fails for nonlocal kinetic equations. Such equations arise as linearized models for the Boltzmann equation without cutoff and are of hypoelliptic type. We provide a counterexample for the simplest equation in this theory, the fractional Kolmogorov equation. Our result reflects a purely nonlocal phenomenon since the Harnack inequality holds true for local kinetic equations like the Kolmogorov equation.
\end{abstract}

\allowdisplaybreaks

\maketitle
\section{Introduction}  

The goal of this article is to show that the \textit{Harnack inequality fails} for the following nonlocal kinetic equation
\begin{align}
\label{eq:fractional-Kolmogorov}
\partial_t f + v \cdot \nabla_x f + (-\Delta_v)^s f = 0 ~~ \text{ in } \cQ \subset \R \times \R^{d} \times \R^d
\end{align}
and its time-independent version
\begin{align}
\label{eq:fractional-hypoelliptic}
v \cdot \nabla_x f + (-\Delta_v)^s f &= 0 ~~ \text{ in } \cB \subset \R^{d} \times \R^d,
\end{align}
where $s \in (0,1)$, $d \in \N$, and
\begin{align*}
(-\Delta_v)^s f(x,v) = c_{d,s} ~\text{p.v.}~\int_{\R^d} \big( f(x,v) - f(x,w) \big) |v-w|^{-d-2s} \d w
\end{align*}
denotes the fractional Laplacian acting only on the $v$-variable. Here $c_{d,s} > 0$ denotes a normalization constant. The equation \eqref{eq:fractional-hypoelliptic} is the kinetic fractional Kolmogorov equation, which arises as a linearized model for the Boltzmann equation without Grad's cutoff assumption.

As we will explain below, our result reflects a purely \textit{kinetic nonlocal phenomenon}, since the Harnack inequality holds true for kinetic local equations (i.e., when $s = 1$ in \eqref{eq:fractional-hypoelliptic}) \cite{LaPo94,Mou18,GIMV19,GuMo22}, and also for non-degenerate nonlocal equations \cite{BaLe02,ChKu03, DKP14,Coz17,KaWe23}.

\begin{theorem}
\label{thm:counterexample}
Let $s \in (0,1)$. There exist a constant $c_0 > 0$ and solutions $f_{\eps} : \R^d \times \R^d \to [0,1]$ to
\begin{align*}
v \cdot \nabla_x f_{\eps}(x,v) + (- \Delta_v)^s f_{\eps}(x,v) = 0 ~~ \text{ for } (x,v) \in \cB:= B_1(0) \times B_1(0),
\end{align*}
such that for $\zeta = (\frac{1}{2} e_d,0) \in \R^d \times \R^d$ and every $\eps \in (0,\frac{1}{4})$ it holds $f_{\eps} \not\equiv 0$ and
\begin{align*}
f_{\eps}(\zeta) \le c_0 \, \eps^{d(1+2s) -2s} f_{\eps}(0).
\end{align*}
In particular, the ratio $f_{\eps}(0)/f_{\eps}(\zeta) \to \infty$ as $\eps \to 0$.
\end{theorem}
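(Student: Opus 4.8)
The plan is to realize the $f_{\eps}$ as solutions of an exterior/boundary value problem for the stationary equation on $\cB = B_1(0)\times B_1(0)$, driven by nonnegative data $g_{\eps}\ge 0$ that is concentrated, at a kinetic scale of order $\eps$, near a single boundary point $\xi_0\in\partial\cB$. One takes $\xi_0$ on the ``grazing'' part of the boundary associated with the direction opposite to $\zeta=(\tfrac12 e_d,0)$, i.e.\ a point where the transport field $\dot x=v$ is tangent to $\partial\cB$ and the equation loses its regularizing effect in $v$. Since the solution operator for $v\cdot\nabla_x+(-\Delta_v)^s$ on $\cB$ is positivity preserving and $0\le g_\eps\le 1$, the resulting $f_\eps$ satisfy $0\le f_\eps\le 1$ and $f_\eps\not\equiv 0$; that they solve $v\cdot\nabla_x f_\eps+(-\Delta_v)^s f_\eps=0$ in $\cB$ is seen most cleanly from the Duhamel representation $f_\eps=\int_0^\infty P_t g_\eps\,\d t$, where $P_t$ is the solution semigroup of the time-dependent equation \eqref{eq:fractional-Kolmogorov} and $\supp g_\eps\cap\cB=\varnothing$, so that $v\cdot\nabla_x f_\eps+(-\Delta_v)^s f_\eps=-\int_0^\infty\partial_t(P_tg_\eps)\,\d t=g_\eps$ vanishes on $\cB$.

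The analytic core is a pair of sharp two-sided estimates for the fundamental solution $p_t$ of \eqref{eq:fractional-Kolmogorov}. These exhibit the anisotropic kinetic scaling ($v$ at scale $t^{1/(2s)}$, $x$ at scale $t^{(1+2s)/(2s)}$ along the flow) and, decisively, the heavy polynomial tails $|v-w|^{-d-2s}$ coming from $(-\Delta_v)^s$: off the diffusive core, $p_t$ decays only polynomially in the velocity increment — exactly the feature absent in the local case $s=1$, where the corresponding bounds are Gaussian and the Harnack inequality survives. Using these kernel bounds I would (i) lower-bound $f_\eps(0)$ by following an explicit ``channel'' from $0$ to $\supp g_\eps$, built from a bounded piece of the transport flow together with a single large jump in $v$, and (ii) upper-bound $f_\eps(\zeta)$ by showing that \emph{any} connection from $\zeta$ to $\supp g_\eps$ is forced across the grazing region near $\xi_0$ at scale $\eps$, which costs an extra factor. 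Equivalently, in probabilistic language: the Markov process associated with the operator, started at $\zeta$ and killed on leaving $\cB$, hits $\supp g_\eps$ with probability smaller — by a factor of order $\eps^{d(1+2s)-2s}$ — than the process started at $0$, because the boundary Harnack / Carleson estimate fails at $\xi_0$ for this hypoelliptic equation.

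The exponent itself should drop out of the bookkeeping: writing $Q=d(2+2s)$ for the homogeneous dimension of the kinetic dilations $(x,v)\mapsto(r^{1+2s}x,rv)$, a scale-$\eps$ kinetic neighbourhood of $\xi_0$ contributes $\eps^{Q}$ while the velocity-tail weight $|v-w|^{-d-2s}$ that has to be crossed contributes $\eps^{-(d+2s)}$, and $Q-(d+2s)=d(1+2s)-2s$. I expect the main obstacle to be making the two estimates \emph{matching} — proving the sharp lower bound for $f_\eps(0)$ and the sharp upper bound for $f_\eps(\zeta)$, not merely an inequality in one direction — which needs the precise anisotropic kernel estimates near the singular boundary point and careful tracking of all exponents; a secondary technical point is to check that $f_\eps$ is a genuine (weak or viscosity) solution and globally bounded. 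Given the two matched estimates, $f_\eps(\zeta)\le c_0\,\eps^{d(1+2s)-2s}f_\eps(0)$ and hence $f_\eps(0)/f_\eps(\zeta)\to\infty$ follow at once, contradicting any Harnack inequality on $\cB$ that would bound $f_\eps(0)$ by $f_\eps(\zeta)$.
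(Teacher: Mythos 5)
There is a genuine gap: the construction you propose would not produce an unbounded Harnack ratio. In your setup $f_\eps=\int_0^\infty P_t g_\eps\,\d t$ with $g_\eps$ supported in a kinetic ball of radius $\eps$ around a fixed point $\xi_0\notin\cB$, so $f_\eps(z)=\iint G(z;y,w)\,g_\eps(y,w)\d y\d w$, where $G=\int_0^\infty p_t\,\d t$ is the whole-space potential kernel. Since $G(0;\cdot)$ and $G(\zeta;\cdot)$ are finite, strictly positive and continuous at any fixed $\xi_0$ (the stable jump kernel has full support), shrinking $\supp g_\eps$ to the point $\xi_0$ gives $f_\eps(0)/f_\eps(\zeta)\to G(0;\xi_0)/G(\zeta;\xi_0)$, a finite constant, so the ratio does not blow up. Choosing $\xi_0$ on a ``grazing'' part of $\partial\cB$ does not help: in your representation the boundary of $\cB$ never enters (you use the free semigroup, not the killed one), and the process started at $\zeta$ is in no way forced through a neighbourhood of $\xi_0$ --- it can jump in $v$ and then drift straight into $\supp g_\eps$, so the upper bound in your step (ii) has no basis. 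What actually drives the counterexample is a datum that degenerates \emph{anisotropically} and is positioned relative to the two points: the paper takes exterior data $\1_{\cE_\eps}$ with $\cE_\eps=B_{\eps^{1+2s}}(0)\times B_1(3e_d)$, thin only in $x$ and of order one in $v$, placed far away in velocity; inside $\cB$ the solution is then the killed Green potential of an effective source comparable to $\1_{\cG_\eps}$, $\cG_\eps=B_{\eps^{1+2s}}(0)\times B_1(0)$, whose thin $x$-slab \emph{contains} the favored point $0$ while lying at $x$-distance $\sim\tfrac12$ from $\zeta$. This yields $f_\eps(0)\gtrsim\eps^{2s}$ by pure kinetic scaling (an occupation-time bound, \autoref{lemma:u-lower-bound}) and $f_\eps(\zeta)\lesssim\eps^{d(1+2s)}$ because from an order-one $x$-distance the integrated kernel bound of \autoref{lemma:fund-sol-bounds} is uniformly integrable in $t$ and one only pays the $x$-volume of the slab (\autoref{lemma:u-upper-bound}). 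Your bookkeeping $\eps^{Q}\cdot\eps^{-(d+2s)}$ reproduces the number $d(1+2s)-2s$, but it does not correspond to an estimate one can run: there is no $\eps$-sized velocity gap to ``cross'' (the velocity gap is of order one and contributes only constants), and the factor $\eps^{2s}$ is a time scaling, not a jump cost.

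Two secondary points. The bound $0\le f_\eps\le1$ does not follow from $0\le g_\eps\le1$ when $g_\eps$ is a \emph{source}: a potential is not dominated by the sup of its source; in the paper it follows from the maximum principle because $f_\eps$ is prescribed via exterior data $\1_{\cE_\eps}\in[0,1]$ (this is cosmetic, but it shows the source/exterior-datum roles are being conflated in your write-up). Also, your concern about needing matching two-sided sharp heat kernel estimates is misplaced: no kernel lower bounds are required anywhere; the lower bound at $0$ is soft, using only the scaling \eqref{eq:scaling-killed} and the comparison \eqref{eq:heat-kernel-comp} to reduce to the fixed positive constant $\int_0^\infty\iint_{\cB}p_t^{\cB}$, while the upper bound at $\zeta$ uses only the integrated upper bound of \autoref{lemma:fund-sol-bounds} together with Galilean invariance.
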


Thus, \autoref{thm:counterexample} implies the failure of the Harnack inequality for \eqref{eq:fractional-hypoelliptic}, and in particular for \eqref{eq:fractional-Kolmogorov}, because $f_{\eps}$ is independent of $t$.

In the following, we first comment on the relation between \autoref{thm:counterexample} and the recently developed theories for nonlocal hypoelliptic and kinetic equations. Second, we consider its relation to existing results on Harnack inequalities for non-degenerate nonlocal problems. Finally, we explain the strategy of the proof and the definition of the functions $f_{\eps}$.

\subsection{Background on nonlocal kinetic equations}

The fractional Kolmogorov equation
\begin{align*}
\partial_t f + v \cdot \nabla_x f + (-\Delta_v)^s f = 0 ~~ \text{ in } \cQ
\end{align*}
can be regarded as a linearized model for the Boltzmann equation without cutoff. The regularity program for the Boltzmann equation subject to macroscopic bounds has recently been established by Imbert, Silvestre, and Mouhot in a series of works \cite{Sil16,IMS20,ImSi20,ImSi21,ImSi22} (see also the survey \cite{ImSi20b}). A central part of this theory is the development of the regularity theory for solutions to the following more general class of nonlocal kinetic equations
\begin{align}
\label{eq:kinetic-PDE}
\partial_t f + v \cdot \nabla_x f + L_v f = 0 ~~ \text{ in } \cQ,
\end{align}
where $\cQ$ is a kinetic cylinder and $L_v$ denotes an integro-differential operator, acting only on $v \in \R^d$, defined as
\begin{align*}
L_v f(v) = \text{p.v.}~\int_{\R^d} \big( f(v) - f(w) \big) K_{t,x}(v,w) \d w,
\end{align*}
where, for any $t,x$, the kernel $K_{t,x} : \R^d \times \R^d \to [0,\infty]$ satisfies suitable ellipticity and boundedness assumptions. One key step in their program (see \cite{ImSi20}) consists in proving a \textit{weak Harnack inequality} for nonnegative (super)solutions to \eqref{eq:kinetic-PDE} without any regularity assumptions on the kernel, i.e., for some $\eps_0 \in (0,1)$, $C > 0$ independent of $f$, and suitable kinetic cylinders $\cQ^{\ominus}, \cQ^{\oplus}$ the following estimate
\begin{align}
\label{eq:weak-Harnack}
\left( \dashint_{\cQ^{\ominus}} f^{\eps_0} \right)^{1/\eps_0} \le C \inf_{\cQ^{\oplus}} f.
\end{align}
The weak Harnack inequality is used to deduce an a priori H\"older regularity estimate for solutions to \eqref{eq:kinetic-PDE} and also for the Boltzmann equation. This result can be regarded as a De Giorgi-Nash-Moser type theorem for nonlocal kinetic equations (see also \cite{Sto19}). 

The question whether even the strong Harnack inequality holds true for \eqref{eq:kinetic-PDE}, i.e., whether there exists $C > 0$, depending only on $d,s,$ and the ellipticity constants of $K_{t,x}$, such that
\begin{align}
\label{eq:strong-Harnack}
\sup_{\cQ^{\ominus}} f \le C \inf_{\cQ^{\oplus}} f
\end{align}
has been an open problem after \cite{Sil16,IMS20,ImSi20,ImSi21,ImSi22}. Our main result \autoref{thm:counterexample} provides a negative answer to this question, already in case $L_v = (-\Delta_v)^s$ and for any dimension $d \in \N$. It even shows that the Harnack inequality fails for solutions that are stationary in $t$.
Moreover, due to \eqref{eq:weak-Harnack}, \autoref{thm:counterexample} implies that nonnegative weak solutions to \eqref{eq:kinetic-PDE} do not satisfy a local boundedness estimate only in terms of local quantities
\begin{align}
\label{eq:loc-bd}
\sup_{\cQ^{\ominus}} f \le C \left( \dashint_{2\cQ^{\ominus}} f^2 \right)^{1/2}.
\end{align}
Moreover, a close inspection of our counterexample \eqref{eq:counterexample} shows that \eqref{eq:loc-bd} remains false if a tail term (see \cite{KaWe23}) is added to the right-hand side in \eqref{eq:loc-bd}, as long as the tail is $L^p$ in $x$ for some $p < \frac{d(1+2s)}{2s}$.
The failure of both, the Harnack inequality \eqref{eq:strong-Harnack}, and the local boundedness \eqref{eq:loc-bd} of weak solutions to \eqref{eq:kinetic-PDE} (and to \eqref{eq:fractional-hypoelliptic}), is in stark contrast to the theory of nonlocal elliptic and parabolic equations (see \cite{DKP14,DKP16,Coz17}, and \cite{KaWe23}).

%\boxed{\parbox{\textwidth}{
%\textbf{Comment of the authors:} The original version of this article on arxiv.org contained the paragraph below. After submission of our article to arxiv.org, the articles \cite{APP24a,APP24b} have been retracted and \cite{Loh24a-v2} was updated to \cite{Loh24a-v3} with a modified main result. We include our orginal comments for the possible benefit of a reviewer. The bibliography of the published version will not contain entries for \cite{APP24a,APP24b} and \cite{Loh24a-v2}.
%
%medskip
%		
%\emph{Let us comment on the recent articles \cite{APP24a,APP24b} and \cite{Loh24a-v2}, which attempt to prove a strong Harnack inequality for nonlocal kinetic equations \eqref{eq:kinetic-PDE}. Their main results \cite[Theorem 1.3]{APP24a}, \cite[Theorem 1.3]{APP24b}, \cite[Theorem 1.1]{Loh24a-v2} contradict \autoref{thm:counterexample}. We mention mistakes in the corresponding proofs: The parameter $\lambda > 1$ in \cite[(7.1)]{APP24a} might depend on $f,\rho,R$, and therefore the estimate \cite[(7.3)]{APP24a} does not follow. The conclusion based on \cite[(7.9)]{APP24b} is incorrect. In \cite[Proposition 3.1]{Loh24a-v2} the test function $f_{l,\eps}^{-(1-\zeta)} \psi_l$ is not admissible since it is not compactly supported in $Q_R$.}
%
%}}

\cite{Loh24b} establishes a nonlinear version of the Harnack inequality \eqref{eq:strong-Harnack} for weak solutions to \eqref{eq:kinetic-PDE}. This result is interesting in itself and does not contradict \autoref{thm:counterexample}. Our counterexample implies that the exponent $\beta \in (0,1)$ in \cite[Theorem 1.3]{Loh24b} is at most $\frac{2s}{d(1+2s)}$. 

\autoref{thm:counterexample} has recently stimulated further research on variants of the Harnack inequality \eqref{eq:strong-Harnack} and the local boundedness estimate \eqref{eq:loc-bd} for nonlocal kinetic equations \eqref{eq:kinetic-PDE}.
\cite{Loh24a} states a Harnack inequality for global solutions under additional assumptions, see \cite[Definition 2.2 (iii), (iv)]{Loh24a}. Moreover, \cite{APP24} establishes \eqref{eq:loc-bd} with a tail term in $L^p$ for any $p > \frac{4s + 4d + 4ds}{2s}$.

From the perspective of local kinetic equations, the failure of the Harnack inequality for nonlocal equations such as \eqref{eq:fractional-Kolmogorov} and \eqref{eq:fractional-hypoelliptic} comes as a surprise. Indeed, in \cite{LaPo94}, \cite{KoPo16}, \cite{AbTr19}, \cite{AEM19}, \cite{PaPo04},  Harnack inequalities have been established for different classes of \textit{local} hypoelliptic equations. Moreover, in \cite{Mou18,GIMV19,GuMo22} the authors prove Harnack inequalities for local kinetic equations driven by a second order operator in divergence form acting on $v$, using approaches via Moser and De Giorgi iteration, respectively. Hence, \autoref{thm:counterexample} describes a purely nonlocal effect, on which we will elaborate in the discussion below.

Note that in the past years there has been an increasing interest in the investigation of nonlocal hypoelliptic operators of H\"ormander type
\begin{align*}
\mathcal{K}f := v \cdot \nabla_x f + L_v f,
\end{align*}
and there are many parallels between the local and nonlocal theories of hypoellipticity. For instance, we refer to \cite{AIn24a}, \cite{AIN24b} for the construction of fundamental solutions to the corresponding nonlocal kinetic Cauchy problems, to \cite{ChZh18}, \cite{HMP19}, \cite{NiZa21}, \cite{Nie22}, \cite{NiZa22} for nonlocal kinetic $L^p$ maximal regularity results, to \cite{Loh23}, \cite{ImSi21}, \cite{HWZ20} for nonlocal kinetic Schauder theory, and to \cite{HuMe16}, \cite{ChZh18}, \cite{HWZ20}, \cite{HPZ21} for approaches to nonlocal kinetic and hypoelliptic equations via degenerate stochastic differential equations driven by L\'evy processes.

\subsection{Background on Harnack inequalities for nonlocal operators}

The development of the De Giorgi-Nash-Moser theory for nonlocal equations, and in particular the investigation of Harnack inequalities has become an active field of research in the past 20 years. In the simplest case
\begin{align}
\label{eq:s-harm}
(-\Delta)^s f = 0 ~~ \text{ in } B_1,
\end{align}
the Harnack inequality  for globally nonnegative solutions follows from classical works, including \cite{Rie38}, \cite{Lan71}. It was shown in \cite{Kas07} that the Harnack inequality fails for \eqref{eq:s-harm} unless one assumes that $u \ge 0$ in $\R^d$. If solutions are nonnegative only in $B_1$, one can still establish a nonlocal version of the Harnack inequality including a so-called tail term, which captures the effect caused by the nonlocality of the operator (see \cite{Kas11,DKP16,DKP14,Coz17}). Since \autoref{thm:counterexample} describes the failure of the Harnack inequality for globally nonnegative solutions, we will only focus on this case in the sequel.

A challenging topic is the investigation of Harnack inequalities for elliptic and parabolic equations
\begin{align}
\label{eq:L-harm}
L f = 0 ~~ \text{ in } B_1, \qquad \partial_t f + L f = 0 ~~ \text{ in } Q_1
\end{align}
driven by more general nonlocal operators of the form 
\begin{align*}
L f(v) = \text{p.v.}~\int_{\R^d} \big( f(v) - f(w) \big) \mu(v,\d w),
\end{align*}
where $(\mu(v,\cdot))_{v\in \R^d}$ is a family of measures on $\R^d$, satisfying suitable assumptions. We refer to \cite{BaLe02}, \cite{ChKu03}, \cite{BaKa05}, \cite{CaSi09}, \cite{DKP14}, \cite{Coz17}, \cite{Str19}, \cite{CKW23} and to \cite{ChDa16}, \cite{CKW19}, \cite{CKW20}, \cite{KaWe24} for proofs of Harnack inequalities under fairly general assumptions on $\mu$. Moreover, only recently, Harnack inequalities have been established for a general class of parabolic nonlocal equations in \cite{KaWe23}, using an entirely analytic approach.

It is important to point out that the Harnack inequality holds true in the aforementioned works only under suitable assumptions on $\mu$ that go beyond classical ellipticity conditions. In fact, there exist many classes of uniformly elliptic examples $\mu$ for which the Harnack inequality fails (see \cite{BoSz05}, \cite{BaKa05}, \cite{BaCh10}, \cite{MaMu22}, \cite{Kit23}). Most of these counterexamples have in common that $\mu$ exhibits \textit{long range oscillations}, namely that the value of $\mu(v,A)$ can vary drastically for a given set $A \subset \R^d \setminus B_2$, even if $v \in B_1$ only changes marginally. A simple example is given by
\begin{align*}
\mu_{\text{axes}}(v,\d w) := \sum_{i = 1}^d |v_i - w_i|^{-1-2s} \d w_i \prod_{j \not = i}^d \delta_{\{v_j\}}(\d w_j).
\end{align*}  
The measure $\mu_{\text{axes}}(v,\d w)$ charges only those sets that intersect the coordinate axes centered at $v$, which leads to long range oscillations, as described before. As a consequence, a solution to \eqref{eq:L-harm} might oscillate around a point $v \in B_1$, depending on whether $\mu_{\text{axes}}(v,\d w)$ interacts with certain portions of the exterior of the solution domain for suitable exterior data. 

As we will see in the next section, the main observation in the proof of \autoref{thm:counterexample} is that, although $(-\Delta_v)^s$ is a well-behaved, non-degenerate diffusion operator in $v$, the combination with the drift term $v \cdot \nabla_x$ makes the resulting operator $\mathcal{K} = v \cdot \nabla_x + (-\Delta_v)^s$  sensitive to changes of the exterior data when the $x$-variable is varied. This anisotropic behavior leads to an effect that is very similar to the one present for $\mu_{\text{axes}}$, and becomes most apparent in our proof of \autoref{thm:counterexample} through \eqref{eq:formula-u-eps}. 

Our result shows that the aforementioned anisotropy of $\mathcal{K}$ leads to a failure of the Harnack inequality. This phenomenon is quite remarkable since the degeneracy of $\cK$ is no obstruction to \textit{$C^{\infty}$ regularity of bounded solutions} to the fractional Kolmogorov equation \eqref{eq:fractional-Kolmogorov} (see \cite{ImSi20,ImSi21,ImSi22}). In fact, velocity averaging lemmas are available for $\mathcal{K}$ (see \cite{ImSi20}, \cite{Loh23}, \cite{NiZa21}) and lead to a regularity transfer from the $v$-variable to the $x$-variable in complete analogy to the situation for local  operators falling within the realm of H\"ormander's hypoellipticity theory.

Let us close the discussion with a remark on why the new techniques for nonlocal parabolic equations developed in \cite{KaWe23} do not apply to nonlocal kinetic equations. A main idea in \cite{KaWe23} was to estimate the parabolic tail (which is $L^1$ in the $t$-variable) by local quantities and to combine this result with a local boundedness estimate involving the $L^1$ tail. Note that one cannot expect to bound the $L^p$ tail for $p > 1$ by local quantities, since weak solutions a priori do not need to have finite $L^p$ tails. On the other hand, the local boundedness estimate involving the $L^1$ tail is a borderline result, since the tail can be considered as a source term belonging to $L^{1,\infty}_{t,x}$, which is critical with respect to parabolic scaling. For kinetic equations, due to the same reason, there is no hope to estimate any other tail than the $L^{1,1}_{t,x}$ tail from above by a local quantity. However,  a local boundedness estimate involving the $L^{1,1}_{t,x}$ tail fails, since such tails would correspond to source terms in $L^{1,1,\infty}_{t,x,v}$, which is supercritical with respect to kinetic scaling (see also the discussion after \eqref{eq:loc-bd}). We refer to \cite{Sto19} for an extension of the results in \cite{ImSi20} including unbounded source terms.

\subsection{Strategy of proof}

Previous proofs of the failure of the Harnack inequality for non-degenerate nonlocal equations \eqref{eq:L-harm} rely on purely probabilistic arguments (see \cite{BoSz05}, \cite{BaCh10}, \cite{MaMu22}), involving exit- and hitting-time estimates for the corresponding Markov jump processes. In contrast to that, our proof of \autoref{thm:counterexample} is of analytic nature and uses only basic estimates of the fundamental solution, scaling properties, and the weak maximum principle (see Section \ref{sec:prelim}). 

In the sequel, let us define the functions $f_{\eps}$ and sketch the main arguments of our proof of \autoref{thm:counterexample}. We define the following subsets of $\R^d \times \R^d$ for $\eps \in (0,1)$
\begin{align*}
\cB = B_1(0) \times B_1(0), \qquad \cG_{\eps} = B_{\eps^{1+2s}}(0) \times B_1(0), \qquad \cE_{\eps} = B_{\eps^{1+2s}}(0) \times B_1(3e_d)
\end{align*}
and let $f_{\eps}$ be the solution to 
\begin{align}
\label{eq:counterexample}
\left\{\begin{array}{rcl}
v \cdot \nabla_x f_{\eps} + (-\Delta_v)^s f_{\eps} &=& 0 ~~~~~ \text{ in } \cB,\\
f_{\eps} &=& \1_{\cE_{\eps}} ~~ \text{ in } (\R^d \times \R^d) \setminus \cB. 
\end{array}\right.
\end{align}

The proof of \autoref{thm:counterexample} relies on the following two lemmas, whose proofs are given in Section \ref{sec:lemma1} and Section \ref{sec:lemma2}, respectively.

\begin{lemma}
\label{lemma:u-lower-bound}
There exists $c_1 > 0$ such that for any $\eps \in (0,1)$:
\begin{align*}
f_{\eps}(0) \ge c_1 \eps^{2s}.
\end{align*}
\end{lemma}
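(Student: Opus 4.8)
The plan is a comparison argument: I would bound $f_{\eps}$ from below by an explicit barrier concentrated in a thin kinetic box around the origin, and let it be ``fed'' through the part of the exterior datum that equals $1$. Two observations get things started. First, by the weak maximum principle (or directly from the definition) $0\le f_{\eps}\le 1$, and by the interior hypoelliptic regularity theory for \eqref{eq:fractional-hypoelliptic} one has $f_{\eps}\in C^{\infty}_{\loc}(\cB)$, so the equation may be used pointwise. Second, since $B_1(3e_d)\cap B_1(0)=\emptyset$, the whole set $\cE_{\eps}=B_{\eps^{1+2s}}(0)\times B_1(3e_d)$ lies outside $\cB$, whence $f_{\eps}\equiv 1$ on $\cE_{\eps}$. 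The point is that from any $(x_0,v_0)$ with $x_0\in B_{\eps^{1+2s}}(0)$ and $v_0\in B_1(0)$ the operator $(-\Delta_v)^s$ ``sees'' this value-one set with an order-one weight, because $|v_0-w|\in[1,5]$ for $w\in B_1(3e_d)$; this is what forces $f_{\eps}$ up near the origin. The exponent $2s$, hence the failure of Harnack, is the price of keeping $x$ inside the tiny ball $B_{\eps^{1+2s}}(0)$: by the kinetic scaling $(x,v)\mapsto(\eps^{1+2s}x,\eps v)$ this is compatible only with $v$-scales $\lesssim\eps$, where $(-\Delta_v)^s$ costs $\eps^{-2s}$.

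Concretely, I would fix cutoffs $\phi,\chi\in C_c^{\infty}(B_1(0))$ with $0\le\phi,\chi\le 1$ and $\phi\equiv\chi\equiv 1$ on $B_{1/2}(0)$, and set
\begin{align*}
b(x,v):=c_1\,\eps^{2s}\,\phi\big(\eps^{-(1+2s)}x\big)\,\chi\big(\eps^{-1}v\big),
\end{align*}
so that $\{b>0\}\subseteq B_{\eps^{1+2s}}(0)\times B_{\eps}(0)\subset\subset\cB$ (using $\eps<1$). From $\nabla_x b=c_1\,\eps^{-1}\,\chi(\eps^{-1}v)\,(\nabla\phi)(\eps^{-(1+2s)}x)$ and $(-\Delta_v)^s b=c_1\,\phi(\eps^{-(1+2s)}x)\,\big((-\Delta)^s\chi\big)(\eps^{-1}v)$, and since $\chi(\eps^{-1}v)\ne 0$ forces $|v|<\eps$, one gets the pointwise bound
\begin{align*}
\big|\,v\cdot\nabla_x b+(-\Delta_v)^s b\,\big|\le c_1\big(\|\nabla\phi\|_{\infty}+\|(-\Delta)^s\chi\|_{\infty}\big)\qquad\text{on }\R^d\times\R^d.
\end{align*}
Choosing $c_1=c_1(d,s)>0$ small enough, the right-hand side is at most $\gamma_1:=c_{d,s}\,5^{-d-2s}\,|B_1(0)|$, and one checks $\gamma_1\le c_{d,s}\int_{B_1(3e_d)}|v-w|^{-d-2s}\,\d w$ for every $v\in B_1(0)$.

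For the comparison, set $w:=b-f_{\eps}$. Since $f_{\eps}\ge 0$, we have $\{w>0\}\subseteq\{b>0\}\subseteq B_{\eps^{1+2s}}(0)\times B_{\eps}(0)\subset\subset\cB$, and since $w$ is continuous on the closure of that box and vanishes on its boundary (where a cutoff vanishes), if $M:=\sup w>0$ it is attained at an interior point $(x_0,v_0)$ with $x_0\in B_{\eps^{1+2s}}(0)$. There $\nabla_x w=0$, hence $v_0\cdot\nabla_x w(x_0,v_0)=0$; splitting the integral defining $(-\Delta_v)^s w(x_0,v_0)$ and using $w(x_0,\cdot)\le M$ globally together with $w(x_0,w')=b(x_0,w')-f_{\eps}(x_0,w')=0-1=-1$ for $w'\in B_1(3e_d)$ (because $f_{\eps}\equiv 1$ on $\cE_{\eps}$, $x_0\in B_{\eps^{1+2s}}(0)$, and $\chi(\eps^{-1}w')=0$ there),
\begin{align*}
\big(v_0\cdot\nabla_x+(-\Delta_v)^s\big)w(x_0,v_0)=(-\Delta_v)^s w(x_0,v_0)\ge c_{d,s}\int_{B_1(3e_d)}(M+1)\,|v_0-w'|^{-d-2s}\,\d w'\ge(M+1)\gamma_1.
\end{align*}
On the other hand, since $f_{\eps}$ solves the equation in $\cB$ and by the barrier estimate,
\begin{align*}
\big(v_0\cdot\nabla_x+(-\Delta_v)^s\big)w(x_0,v_0)=\big(v_0\cdot\nabla_x+(-\Delta_v)^s\big)b(x_0,v_0)\le\gamma_1.
\end{align*}
Comparing gives $(M+1)\gamma_1\le\gamma_1$, i.e.\ $M\le 0$, a contradiction. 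Hence $f_{\eps}\ge b$ everywhere, and in particular $f_{\eps}(0)\ge b(0)=c_1\eps^{2s}$.

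I expect the one genuinely delicate point to be the pointwise use of the equation at the maximum of $w$: it relies on interior smoothness of $f_{\eps}$, i.e.\ on the hypoelliptic regularity theory for \eqref{eq:fractional-hypoelliptic}, which I would invoke rather than reprove (the same touching argument also runs in the viscosity sense). Minor items to verify: attainment of the supremum (fine, since $\{w>0\}$ is precompact in $\cB$ where $w$ is continuous) and integrability of the principal-value integral at $v_0$ (harmless, $w(x_0,\cdot)$ being smooth with a maximum at $v_0$). Everything else is bookkeeping; the only non-routine ingredient is the anisotropic pair of scales — $\eps^{1+2s}$ in $x$, $\eps$ in $v$ — dictated by kinetic scaling: the gradient of the $x$-cutoff costs $\eps^{-(1+2s)}$, which contracted against $|v|\lesssim\eps$ on the support of the $v$-cutoff yields $\eps^{-2s}$, exactly matching $(-\Delta_v)^s$ of the $v$-cutoff, so the prefactor $\eps^{2s}$ renders $v\cdot\nabla_x b+(-\Delta_v)^s b$ of order one — small against the order-one nonlocal feeding term $\gamma_1$ — while forcing the barrier's height at the origin to be $\sim\eps^{2s}$.
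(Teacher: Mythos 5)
Your argument is correct, but it takes a genuinely different route from the paper. You prove the lower bound by a barrier/touching argument: the kinetic-scaled bump $b=c_1\eps^{2s}\phi(\eps^{-(1+2s)}x)\chi(\eps^{-1}v)$ has $|v\cdot\nabla_x b+(-\Delta_v)^s b|\lesssim c_1$ (the $\eps^{-1-2s}$ cost of $\nabla_x\phi$ is compensated by $|v|\le\eps$, and $(-\Delta_v)^s$ of the $v$-bump costs exactly $\eps^{-2s}$), while at an interior maximum of $b-f_\eps$ the fractional Laplacian feeds in the value $1$ from $\cE_\eps=B_{\eps^{1+2s}}(0)\times B_1(3e_d)$ with an order-one weight because the touching point has $x_0\in B_{\eps^{1+2s}}(0)$; choosing $c_1$ small yields the contradiction and $f_\eps\ge b$, hence $f_\eps(0)\ge c_1\eps^{2s}$. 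The paper instead represents $f_\eps$ via the killed fundamental solution, $f_\eps(0)=\int_0^\infty\iint_{\cG_\eps}p_t^{\cB}\,g_\eps$, and extracts the factor $\eps^{2s}$ from the kinetic scaling \eqref{eq:scaling-killed} combined with the domain monotonicity \eqref{eq:heat-kernel-comp}; no regularity of $f_\eps$ is needed there beyond the representation formula \eqref{eq:formula-u-eps}. Your proof buys a more elementary, purely pointwise mechanism that makes the anisotropy (support of width $\eps^{1+2s}$ in $x$ versus $\eps$ in $v$) completely explicit, but it leans on one nontrivial external input that you correctly flag: to evaluate the equation pointwise at the maximum you need interior smoothness (or at least enough regularity in $v$ and along the transport direction) of the weak solution $f_\eps$, i.e.\ the kinetic hypoelliptic regularity theory of Imbert--Silvestre, or alternatively a verification that $f_\eps$ is a viscosity supersolution so the touching argument runs in that framework; this identification of the constructed solution with a pointwise/viscosity solution is the only step that is not self-contained. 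Minor points (attainment of the maximum on the compact support of $b$, convergence of the principal value at the touching point, $0\le f_\eps\le 1$, and $f_\eps\equiv 1$ on $\cE_\eps$ since $\cE_\eps\cap\cB=\emptyset$) are all handled correctly.
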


\begin{lemma}
\label{lemma:u-upper-bound}
Let $\zeta := (\frac{1}{2} e_d,0) \in \R^d \times \R^d$. There exists $c_2 > 0$ such that for any $\eps \in (0,\frac{1}{4})$:
\begin{align*}
f_{\eps}(\zeta) \le c_2 \eps^{d(1+2s)}.
\end{align*}
\end{lemma}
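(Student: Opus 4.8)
The plan is to exploit two structural features of $\cK = v\cdot\nabla_x + (-\Delta_v)^s$. First, the kinetic scaling: the $x$-variable has anomalous dimension $1+2s$, so a bounded density in $x$ should cost a factor comparable to $|B_{\eps^{1+2s}}(0)| \sim \eps^{d(1+2s)}$ when the position is confined to the thin slab $\{|x|<\eps^{1+2s}\}$. Second, the nonlocal part of $\cK$ acts only in the velocity, so along the underlying path the position moves continuously. Hence any exit of $\cB$ that produces a configuration in $\cE_{\eps}$ must occur through a jump of the velocity at an instant when the position already lies in $\{|x|<\eps^{1+2s}\}$; this is the mechanism that forces the volume factor.

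Concretely, I would first write $f_{\eps}$ through the Poisson kernel representation for $\cK$ in $\cB$. Because $(-\Delta_v)^s$ jumps only in velocity and $\cE_{\eps}$ is contained in the region $|v|\ge 2$, the continuous part of the exit does not contribute, and, denoting by $G_{\cB}$ the Green function of $\cK$ in $\cB$,
\begin{align*}
f_{\eps}(\zeta) = c_{d,s}\int_{B_{\eps^{1+2s}}(0)}\int_{B_1(0)}G_{\cB}\big(\zeta,(y,u)\big)\left(\int_{B_1(3e_d)}|u-w|^{-d-2s}\d w\right)\d u\,\d y.
\end{align*}
Since $\dist\big(B_1(0),B_1(3e_d)\big)=1$, the inner integral is bounded by a constant depending only on $d,s$, so
\begin{align*}
f_{\eps}(\zeta)\le C\int_{B_{\eps^{1+2s}}(0)}\left(\int_{\R^d}G_{\cB}\big(\zeta,(y,u)\big)\d u\right)\d y.
\end{align*}

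Next I would bound the inner integral uniformly for $y\in B_{1/4}(0)$; this suffices because $\eps^{1+2s}<\eps<\tfrac14$ for $\eps\in(0,\tfrac14)$, and such $y$ stays at distance $\ge\tfrac14$ from the position $\tfrac12 e_d$ of $\zeta$. By the weak maximum principle, $G_{\cB}\le G$, where $G$ is the Green function of $\cK$ on $\R^{2d}$ (finite off the diagonal by the fundamental solution bounds), whence $\int_{\R^d}G_{\cB}(\zeta,(y,u))\d u\le\int_0^{\infty}p_t(\zeta,y)\,\d t$ with $p_t(\zeta,y):=\int_{\R^d}\Gamma\big(t,\zeta;(y,u)\big)\d u$ the $x$-marginal of the free fundamental solution $\Gamma$ of $\partial_t+\cK$. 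Kinetic scaling gives $p_t(\zeta,y)=t^{-d(1+2s)/(2s)}\Psi\big((y-\tfrac12 e_d)\,t^{-(1+2s)/(2s)}\big)$, and $\Psi$ has polynomial decay of order $d+2s>d$. Thus the contribution of small $t$ is controlled by the spatial tail (since $|y-\tfrac12 e_d|\ge\tfrac14$ is at fixed positive distance), and the contribution of large $t$ by $\int_1^{\infty}t^{-d(1+2s)/(2s)}\d t<\infty$, which converges precisely because $\frac{d(1+2s)}{2s}=\frac{d}{2s}+d>1$ for all $d\ge1$, $s\in(0,1)$. Hence $\sup_{y\in B_{1/4}(0)}\int_0^{\infty}p_t(\zeta,y)\,\d t\le C_2<\infty$, and therefore $f_{\eps}(\zeta)\le C\,C_2\,|B_{\eps^{1+2s}}(0)| = c_2\,\eps^{d(1+2s)}$.

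The main obstacle is the last step: one needs sufficiently sharp anisotropic estimates for $\Gamma$ — on the diagonal the decay $t^{-d(1+2s)/(2s)}$ for the $x$-marginal (to secure integrability as $t\to\infty$) and off the diagonal enough polynomial decay in $|x-y|\,t^{-(1+2s)/(2s)}$ (to secure integrability as $t\to0$, where the position $\tfrac12 e_d$ is far from $B_{1/4}(0)$). If these heat kernel bounds and the comparison $G_\cB\le G$ are available from Section~\ref{sec:prelim}, the remainder is bookkeeping. It is worth emphasising that this occupation-time estimate is strictly sharper than a naive hitting-probability bound for the position, and that the gain is \emph{paid for} by the boundedness of the velocity jump rate from $B_1(0)$ into $B_1(3e_d)$ — which is where the separation of $\cE_{\eps}$ from $\cB$ in the velocity variable enters the argument.
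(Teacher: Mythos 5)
Your argument is essentially the paper's proof: you use the same source representation over $\cG_{\eps}$ (the paper's \eqref{eq:formula-u-eps}, with $G_{\cB}$ being the time-integrated killed kernel), the comparison with the whole-space kernel via the maximum principle \eqref{eq:heat-kernel-comp}, and the velocity-integrated bound of \autoref{lemma:fund-sol-bounds} (your $\Psi$, with kinetic scaling and decay of order $d+2s$), split at $t=1$ exactly as in the paper, with the volume of $B_{\eps^{1+2s}}$ producing $\eps^{d(1+2s)}$. The kernel facts you conditioned on are precisely what Section \ref{sec:prelim} supplies (your claim that the $x$-marginal depends only on $y-\tfrac12 e_d$ is the paper's Galilean invariance step), so the proposal is correct and follows the same route.
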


Clearly, \autoref{lemma:u-lower-bound} and \autoref{lemma:u-upper-bound} immediately imply our main result \autoref{thm:counterexample}.

\begin{proof}[Proof of \autoref{thm:counterexample}]
By \autoref{lemma:u-lower-bound} and \autoref{lemma:u-upper-bound}, we have
\begin{align*}
f_{\eps}(\zeta) \le c_2 \eps^{d(1+2s)} \le c_2 c_1^{-1} \eps^{d(1+2s)-2s} f_{\eps}(0).
\end{align*}
Thus, we conclude the proof by setting $c_0 := c_2 c_1^{-1}$.
\end{proof}

\subsection{Outline}
This article is structured as follows. In Section \ref{sec:prelim}, we collect several auxiliary results, introduce the fundamental solution to the fractional Kolmogorov equation and derive a helpful identity for $f_{\eps}$ in \eqref{eq:formula-u-eps}. Then, in Section \ref{sec:lemma1} and Section \ref{sec:lemma2} we establish \autoref{lemma:u-lower-bound} and \autoref{lemma:u-upper-bound}, respectively.

\subsection*{Acknowledgments}

We thank Florian Grube for helpful comments on the manuscript. Moreover, we are grateful to Florian Grube and Tuhin Ghosh for sharing their insights regarding \eqref{eq:integral-in-v}.

\subsection*{Data availability statement}

Data sharing is not applicable to this article as no data sets were generated or analyzed.

\section{Preliminaries}
\label{sec:prelim}

Let us collect several key properties that will be important for our approach.

Our proof of \autoref{thm:counterexample} makes use of the fundamental solution $(t,(x,v),(y,w)) \mapsto p_t(x,v;y,w)$ to the fractional Kolmogorov equation \eqref{eq:fractional-Kolmogorov} in $(0,\infty) \times \R^d \times \R^d$, i.e., for any $g \in L^{2}(\R^d \times \R^d)$ it holds that $f : \R^d \times \R^d \to \R$ given by
\begin{align*}
f(x,v) = \int_0^{\infty} \int_{\R^d} \int_{\R^d} p_t(x,v;y,w)g(y,w) \d y \d w \d t
\end{align*}
is the solution to
\begin{align*}
v \cdot \nabla_x f + (-\Delta_v)^s f = g ~~ \text{ in } \R^d \times \R^d.
\end{align*}
The fundamental solution is smooth, nonnegative, and it satisfies the relation
\begin{align*}
p_t(x,v;y,w) = P_t(x-y-tw,v-w),
\end{align*}
where the function $(t,x,v) \mapsto P_t(x,v)$ satisfies the following scaling property for any $\eps > 0$
\begin{align}
\label{eq:scaling}
P_{t/\eps^{2s}}(x,v) = \eps^{d(2+ 2s)} P_t(\eps^{1+2s} x, \eps v),
\end{align}
and $P_t$ is defined as follows, via the  Fourier transform:
\begin{align*}
\mathcal{F}[P_t](\eta,\xi) = \exp \left( - \int_0^t |\xi + \tau \eta|^{2s} \d \tau \right).
\end{align*}
We refer to \cite[Section 2.3, 2.4]{ImSi20} as a reference to all of the aforementioned properties. 
Moreover, we have the following transformation invariance $P_t(x,v) = P_t(tv-x,v) = P_t(x-tv,-v)$, which follows immediately from the definition.

Also in our work, the asymptotic behavior of the fundamental solution plays an important role. We require the following two elementary properties of $P_t(x,v)$.

\begin{lemma}
\label{lemma:fund-sol-bounds}
The function $P_t(x,v)$ satisfies for some constant $c > 0$
\begin{align*}
\int_{\R^d} P_t(x,w) \d w \le c t^{-d-\frac{d}{2s}} \big(1 + t^{-1-\frac{1}{2s}} |x| \big)^{-d-2s} ~~ \forall t > 0,~ \forall x \in \R^d.
\end{align*}
\end{lemma}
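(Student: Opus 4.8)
The plan is to estimate $\int_{\R^d} P_t(x,w)\,\d w$ by Fourier analysis combined with the scaling relation \eqref{eq:scaling}. First I would reduce to the case $t = 1$: integrating \eqref{eq:scaling} in $v$ over $\R^d$ (with the substitution $v \mapsto \eps v$) gives
\begin{align*}
\int_{\R^d} P_{1/\eps^{2s}}(x,v)\,\d v = \eps^{d(2+2s)-d}\int_{\R^d} P_1(\eps^{1+2s}x,w)\,\d w = \eps^{d(1+2s)}\int_{\R^d} P_1(\eps^{1+2s}x,w)\,\d w,
\end{align*}
so writing $\Phi(x) := \int_{\R^d} P_1(x,w)\,\d w$ and choosing $\eps = t^{-1/2s}$, it suffices to prove $\Phi(x) \le c(1+|x|)^{-d-2s}$ for all $x$ and then unwind the scaling, which produces exactly the claimed exponents $t^{-d-\frac{d}{2s}}$ in front and $t^{-1-\frac{1}{2s}}|x|$ inside the bracket.

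Next I would identify $\Phi$ at the level of Fourier transforms. Integrating out $w$ corresponds to evaluating the Fourier transform of $P_1$ in the $v$-dual variable $\xi$ at $\xi = 0$, so
\begin{align*}
\widehat{\Phi}(\eta) = \mathcal{F}[P_1](\eta,0) = \exp\left(-\int_0^1 |\tau\eta|^{2s}\,\d\tau\right) = \exp\left(-\tfrac{1}{1+2s}|\eta|^{2s}\right).
\end{align*}
Thus $\Phi$ is, up to a constant rescaling of the variable, the density of a rotationally symmetric $2s$-stable distribution on $\R^d$. The desired pointwise bound $\Phi(x) \lesssim (1+|x|)^{-d-2s}$ is then precisely the classical decay estimate for stable densities: near the origin $\Phi$ is bounded (its Fourier transform is integrable), and at infinity $\Phi(x) \sim |x|^{-d-2s}$. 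I would cite this — e.g.\ via the standard asymptotics for stable densities, or derive it directly: smoothness and boundedness of $\Phi$ follow from $\widehat\Phi \in L^1$ together with all its derivatives decaying, and the polynomial tail follows from the fact that $\widehat\Phi(\eta) = 1 - c|\eta|^{2s} + o(|\eta|^{2s})$ has a $|\eta|^{2s}$ singularity in its expansion at $0$, which by a Tauberian/stationary-phase argument transfers to $|x|^{-d-2s}$ decay of $\Phi$.

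Finally I would reassemble. With $\Phi(x) \le c(1+|x|)^{-d-2s}$ in hand, plugging $\eps = t^{-1/2s}$ into the scaling identity above yields
\begin{align*}
\int_{\R^d} P_t(x,w)\,\d w = t^{-\frac{d(1+2s)}{2s}}\,\Phi\big(t^{-\frac{1+2s}{2s}}x\big) \le c\, t^{-d-\frac{d}{2s}}\big(1 + t^{-1-\frac{1}{2s}}|x|\big)^{-d-2s},
\end{align*}
which is the assertion. The main obstacle is the infinite-$|x|$ decay estimate for $\Phi$: one must be careful that integrating the $v$-variable of the fundamental solution does not destroy the stable-type tail, but since $\widehat\Phi$ is exactly a pure stable characteristic function this is a clean, known fact; the bulk of the remaining work is just bookkeeping of the scaling exponents, which I would present compactly rather than belabor.
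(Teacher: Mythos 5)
Your proposal is correct and follows essentially the same route as the paper: reduce to $t=1$ by the scaling identity \eqref{eq:scaling}, observe that integrating out $w$ evaluates the Fourier transform at $\xi=0$, giving $\exp(-(1+2s)^{-1}|\eta|^{2s})$, so that $\int_{\R^d}P_1(x,w)\d w$ is a rescaled rotationally symmetric $2s$-stable (fractional heat kernel) density whose classical decay bound $\lesssim (1+|x|)^{-d-2s}$ yields the claim after unwinding the scaling. The paper likewise simply quotes this standard stable-density bound, so your slightly informal justification of the tail decay is in line with its level of detail.
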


\begin{proof}
We observe that for any $x \in \R^d$
\begin{align}
\label{eq:integral-in-v}
\int_{\R^d} P_1(x,w) \d w \le c (1 + |x|)^{-d-2s}.
\end{align}
This property follows immediately from the computation
\begin{align*}
\int_{\R^d} P_1(x,w) \d w &= \mathcal{F}^{-1}_{\eta} \left[ \int_{\R^d} \mathcal{F}^{-1}_{\xi} \left[ e^{- \int_0^1 |\xi + \tau \eta|^{2s} \d \tau} \right](w) \d w \right](x) \\
& = \mathcal{F}^{-1}_{\eta} \left[ e^{ - (1+2s)^{-1}|\eta|^{2s}} \right](x) = (1+2s)^{\frac{d}{2s}} Q_1 \big((1+2s)^{\frac{1}{2s}} x \big),
\end{align*}
where we denote $Q_1(x) := \mathcal{F}^{-1}_{\eta} \left[ e^{-|\eta|^{2s}} \right](x)$. Note that $Q_t(x) := t^{-\frac{d}{2s}}Q_1(t^{-\frac{1}{2s}}x)$ is the fractional heat kernel, i.e., the fundamental solution to $\partial_t Q_t + (-\Delta)^s Q_t = 0$ in $(0,\infty) \times \R^d$, which satisfies $Q_1(x) \le c(1+|x|)^{-d-2s}$. Hence, we deduce \eqref{eq:integral-in-v}. Then, by scaling \eqref{eq:scaling} for any $x \in \R^d$
\begin{align*}
\int_{\R^d} P_t(x,w) \d w &= \int_{\R^d} t^{-d-\frac{d}{s}} P_1( t^{-1-\frac{1}{2s}} x, t^{-\frac{1}{2s}} w) \d w = t^{-d-\frac{d}{2s}}  \int_{\R^d}  P_1( t^{-1-\frac{1}{2s}} x, w) \d w \\
&\le c t^{-d-\frac{d}{2s}} (1 + t^{-1-\frac{1}{2s}} |x|)^{-d-2s}, 
\end{align*}
as desired.
\end{proof}

Next, given any bounded, open set $\Omega \subset \R^d \times \R^d$, we introduce the fundamental solution $(t,(x,v),(y,w)) \mapsto p_t^{\Omega}(x,v;y,w)$ to the 
Dirichlet problem associated to \eqref{eq:fractional-Kolmogorov} in $\Omega$, for which it holds that for any $g \in L^{2}(\Omega)$ the function
\begin{align*}
f(x,v) = \int_0^{\infty}\iint_{\Omega} p_t^{\Omega}(x,v;y,w)g(y,w) \d y \d w \d t
\end{align*}
is the solution to
\begin{align}
\label{eq:Dirichlet-formula}
\left\{\begin{array}{rcl}
v \cdot \nabla_x f + (-\Delta_v)^s f &=& g ~~ \text{ in } \Omega,\\
f &=& 0 ~~ \text{ in } (\R^d \times \R^d) \setminus \Omega.
\end{array}\right.
\end{align}
Establishing the existence and uniqueness of the fundamental solution $p^{\Omega}_t$ follows by standard arguments. It can be achieved, for instance, by studying the well-posedness of the fractional Kolmogorov equations \eqref{eq:fractional-Kolmogorov}, but also via the theory of stochastic processes, where $p_t^{\Omega}$ is the transition density of the Markov process $Z_t = (x - \int_0^t X_s \d s , v + X_t)$ in $\R^d \times \R^d$ (which is generated by $\cK = v \cdot \nabla_x + (-\Delta_v)^s$), killed upon exiting $\Omega$, where $X_t$ denotes the rotationally symmetric $2s$-stable process in $\R^d$ (see for instance \cite{ChZh18}).

Note that for any two sets $\Omega \subset \Omega' \subset \R^d \times \R^d$ and $\cA \subset \R^d \times \R^d$, we have
\begin{align}
\label{eq:heat-kernel-comp}
0 \le \iint_{\cA} p_t^{\Omega}(x,v;y,w) \d y \d w \le \iint_{\cA} p_t^{\Omega'}(x,v;y,w) \d y \d w \qquad \forall t> 0, ~~ x,v \in \R^d.
\end{align}
This relation remains true for $\Omega'= \R^d \times \R^d$, in which case we set $p^{\R^d \times \R^d}_t := p_t$. To see this, observe that the two functions in \eqref{eq:heat-kernel-comp} solve the fractional Kolmogorov equation $\partial_t f + v \cdot \nabla_x f + (-\Delta)^s f = 0$ in $(0,\infty) \times \Omega$ and in $(0,\infty) \times \Omega'$, respectively, with $f \equiv 0$ in $(0,\infty) \times (\R^d \times \R^d) \setminus \Omega$, and in $(0,\infty) \times (\R^d \times \R^d) \setminus \Omega'$, respectively, and initial data $f(0) = \1_{\cA}$. Then, \eqref{eq:heat-kernel-comp} follows immediately from the weak maximum principle (see \cite[Lemma A.12]{ImSi20}).

Note that in the same way as for $p_t$, we have a scaling property for $p_t^{\Omega}$. Namely, let us define $p_t^{\Omega}(x,v) := p_t^{\Omega}(0,0;x,v) \ge 0$.
Then, it holds (see \cite[Section 2.3]{ImSi20})
\begin{align}
\label{eq:scaling-killed}
p_{t/\eps^{2s}}^{\cB^{\eps}}(x,v) = \eps^{d(2+ 2s)} p_t^{\cB}(\eps^{1+2s} x, \eps v), ~~ \text{ where } \cB^{\eps} := B_{\eps^{-1-2s}} \times B_{\eps^{-1}}.
\end{align}

We can use the previous insights to derive a formula for $f_{\eps}$, which will be of fundamental importance to us in the proofs of \autoref{lemma:u-lower-bound} and \autoref{lemma:u-upper-bound}. Let us define
\begin{align*}
g_{\eps}(x,v) := -(-\Delta_v)^s \1_{\cE_{\eps}}(x,v) = c_{d,s} \1_{B_{\eps^{1+2s}}(0)}(x) \int_{B_1(3e_d)} |v-w|^{-d-2s} \d w, \qquad (x,v) \in \cB,
\end{align*}
and deduce that for some $0 < C_1 \le C_2 < \infty$ it holds
\begin{align}
\label{eq:f_eps-bd}
C_1 \1_{\cG_{\eps}} \le g_{\eps} \le C_2 \1_{\cG_{\eps}} ~~ \text{ in } \cB.
\end{align}
Moreover, it holds by construction for $h_{\eps} := f_{\eps} - \1_{\cE_{\eps}}$:
\begin{align*}
\left\{\begin{array}{rcl}
v \cdot \nabla_x h_{\eps} + (-\Delta_v)^s h_{\eps} &=& g_{\eps} ~~ \text{ in } \cB, \\
h_{\eps} &=& 0 ~~~ \text{ in } (\R^d \times \R^d) \setminus \cB.
\end{array}\right.
\end{align*}
Now, since we can represent $h_{\eps}$ with the help of the fundamental solution $p_t^{\cB}$ (see \eqref{eq:Dirichlet-formula}), and using also that $f_{\eps} \equiv h_{\eps}$ in $\cB$, and \eqref{eq:f_eps-bd}, we have
\begin{align}
\label{eq:formula-u-eps}
f_{\eps}(x,v) = \int_0^{\infty} \iint_{\cG_{\eps}} p_t^{\cB}(x,v;y,w) g_{\eps}(y,w) \d y \d w \d t ~~ \forall (x,v) \in \cB.
\end{align}
Here, the key observation is that the integration in $y,w$ in \eqref{eq:formula-u-eps} takes place only over the set $\cG_{\eps}$.

\section{Proof of \autoref{lemma:u-lower-bound}}
\label{sec:lemma1}

The goal of this section is to prove \autoref{lemma:u-lower-bound}. It relies mainly on \eqref{eq:formula-u-eps} and a scaling argument.

\begin{proof}[Proof of \autoref{lemma:u-lower-bound}]
First, we deduce from \eqref{eq:formula-u-eps},  \eqref{eq:f_eps-bd}, and the nonnegativity of $p_t^{\cB}$
\begin{align*}
f_{\eps}(0) = \int_0^{\infty}\iint_{\cG_{\eps}} p_t^{\cB}(y,w) g_{\eps}(y,w) \d y \d w \d t \ge C_1 \int_0^{\infty}\int_{B_{\eps^{1+2s}}} \int_{B_{\eps}} p_t^{\cB}(y,w) \d w \d y \d t.
\end{align*}
Recall that $\cB^{\eps} := B_{\eps^{-1-2s}} \times B_{\eps^{-1}}$. Then, we deduce by scaling (see \eqref{eq:scaling-killed})
\begin{align*}
\int_0^{\infty}& \int_{B_{\eps^{1+2s}}} \int_{B_{\eps}} p_t^{\cB}(y,w) \d w \d y \d t \\
& \quad = \eps^{d(2+2s)} \int_0^{\infty}\iint_{\cB} p_t^{\cB}(\eps^{1+2s}y,\eps w) \d w \d y \d t = \int_0^{\infty}\iint_{\cB} p_{t/\eps^{2s}}^{\cB^{\eps}}(y,w) \d w \d y \d t \\
&\quad = \eps^{2s} \int_0^{\infty}\iint_{\cB} p_{t}^{\cB^{\eps}}(y,w) \d w \d y \d t \ge \eps^{2s} \int_0^{\infty}\iint_{\cB} p_{t}^{\cB}(y,w) \d w \d y \d t,
\end{align*}
where we used in the last step that, since $\cB \subset \cB^{\eps}$, by \eqref{eq:heat-kernel-comp} it holds 
\begin{align*}
\iint_{\cB} p_{t}^{\cB^{\eps}}(y,w) \d w \d y \ge \iint_{\cB} p_{t}^{\cB}(y,w) \d w \d y ~~ \forall t > 0.
\end{align*}
Finally, we observe that
\begin{align*}
\int_0^{\infty}\iint_{\cB} p_{t}^{\cB}(y,w) \d w \d y \d t > 0,
\end{align*}
which is immediate, and that this quantity is independent of $\eps$.
\end{proof}

\section{Proof of \autoref{lemma:u-upper-bound}}
\label{sec:lemma2}

The goal of this section is to prove \autoref{lemma:u-upper-bound}.
Its proof relies mainly on \eqref{eq:formula-u-eps} and \autoref{lemma:fund-sol-bounds}.

\begin{proof}[Proof of \autoref{lemma:u-upper-bound}]
First, we observe that by \eqref{eq:formula-u-eps}, \eqref{eq:f_eps-bd}, and \eqref{eq:heat-kernel-comp}, as well as the Galilean invariance $p_t(\zeta;y,w) = P_t(\frac{1}{2}e_d-y-tw,-w) = P_t(\frac{1}{2}e_d - y,w)$:
\begin{align*}
f_{\eps}(\zeta) \le C_2 \int_0^{\infty} \iint_{\cG_{\eps}} p_t(\zeta;y,w) \d y \d w \d t = C_2 \int_0^{\infty} \int_{B_{\eps^{1+2s}}} \int_{B_1} P_t \left(\frac{1}{2} e_d  - y , w \right) \d w \d y \d t.
\end{align*}
Then, by the integrated on-diagonal upper bound for $P_t$ (see \autoref{lemma:fund-sol-bounds}), we deduce
\begin{align*}
\int_{1}^{\infty} \int_{B_{\eps^{1+2s}}} \int_{B_1} P_t \left(\frac{1}{2}e_d - y, w \right) \d w \d y \d t \le c \eps^{d(1+2s)} \int_{1}^{\infty}t^{-d-\frac{d}{2s}} \d t \le c \eps^{d(1+2s)}.
\end{align*}
Next, observing that for $y \in B_{\eps^{1+2s}}$ it holds $|\frac{1}{2}e_d -y| \in (\frac{1}{4},1)$, once $\eps \in (0,\frac{1}{4})$, we obtain by using again the integrated off-diagonal upper bound for $P_t$ (see \autoref{lemma:fund-sol-bounds}):
\begin{align*}
\int_0^{1} \int_{B_{\eps^{1+2s}}} \int_{B_1} P_t \left(\frac{1}{2}e_d - y, w \right) \d w \d y \d t &\le c \eps^{d(1+2s)} \sup_{t \in (0,1)} \sup_{|x| \in (\frac{1}{4},1)} \int_{\R^d} P_t(x,w) \d w \\
&\le c \eps^{d(1+2s)}\sup_{t \in (0,1)} \sup_{|x| \in (\frac{1}{4},1)} t^{1+2s} |x|^{-d-2s} \le c \eps^{d(1+2s)}.
\end{align*}
Combining the previous two estimates yields the desired result.
\end{proof}


\begin{thebibliography}{GPPSVG17}
\bibliographystyle{alpha}

\bibitem[AT19]{AbTr19}
F.~Abedin and G.~Tralli.
\newblock Harnack inequality for a class of {K}olmogorov-{F}okker-{P}lanck
  equations in non-divergence form.
\newblock {\em Arch. Ration. Mech. Anal.}, 233(2):867--900, 2019.

\bibitem[AEP19]{AEM19}
F.~Anceschi, M.~Eleuteri, and S.~Polidoro.
\newblock A geometric statement of the {H}arnack inequality for a degenerate
  {K}olmogorov equation with rough coefficients.
\newblock {\em Commun. Contemp. Math.}, 21(7):1850057, 17, 2019.

%\bibitem[APP24a]{APP24a}
%F.~Anceschi, G.~Palatucci, and M.~Piccinini.
%\newblock Harnack inequalities for kinetic integral equations.
%\newblock {\em Researchgate: 10.13140/RG.2.2.27376.84487}, 2024.

\bibitem[APP24]{APP24}
F.~Anceschi, G.~Palatucci, and M.~Piccinini.
\newblock Harnack inequalities for kinetic integral equations.
\newblock {\em arXiv:2401.14182v3}, 2024.

\bibitem[AIN24a]{AIn24a}
P.~Auscher, C.~Imbert, and L.~Niebel.
\newblock Fundamental solutions to {K}olmogorov-{F}okker-{P}lanck equations
  with rough coefficients: existence, uniqueness, upper estimates.
\newblock {\em arXiv:2401.14182}, 2024.

\bibitem[AIN24b]{AIN24b}
P.~Auscher, C.~Imbert, and L.~Niebel.
\newblock Weak solutions to {K}olmogorov-{F}okker-{P}lanck equations:
  regularity, existence and uniqueness.
\newblock {\em arXiv:2403.17464}, 2024.

\bibitem[BC10]{BaCh10}
R.~Bass and Z.-Q. Chen.
\newblock Regularity of harmonic functions for a class of singular stable-like
  processes.
\newblock {\em Math. Z.}, 266(3):489--503, 2010.

\bibitem[BK05]{BaKa05}
R.~Bass and M.~Kassmann.
\newblock Harnack inequalities for non-local operators of variable order.
\newblock {\em Trans. Amer. Math. Soc.}, 357(2):837--850, 2005.

\bibitem[BL02]{BaLe02}
R.~Bass and D.~Levin.
\newblock Harnack inequalities for jump processes.
\newblock {\em Potential Anal.}, 17(4):375--388, 2002.

\bibitem[BS05]{BoSz05}
K.~Bogdan and P.~Sztonyk.
\newblock Harnack's inequality for stable {L}\'{e}vy processes.
\newblock {\em Potential Anal.}, 22(2):133--150, 2005.

\bibitem[CS09]{CaSi09}
L.~Caffarelli and L.~Silvestre.
\newblock Regularity theory for fully nonlinear integro-differential equations.
\newblock {\em Comm. Pure Appl. Math.}, 62(5):597--638, 2009.

\bibitem[DKP14]{DKP14}
A.~Di~Castro, T.~Kuusi, and G.~Palatucci.
\newblock Nonlocal {H}arnack inequalities.
\newblock {\em J. Funct. Anal.}, 267(6):1807--1836, 2014.

\bibitem[DKP16]{DKP16}
A.~Di~Castro, T.~Kuusi, and G.~Palatucci.
\newblock Local behavior of fractional {$p$}-minimizers.
\newblock {\em Ann. Inst. H. Poincar\'{e} C Anal. Non Lin\'{e}aire},
  33(5):1279--1299, 2016.

\bibitem[CD16]{ChDa16}
H.~Chang-Lara and G.~D\'{a}vila.
\newblock H\"{o}lder estimates for non-local parabolic equations with critical
  drift.
\newblock {\em J. Differential Equations}, 260(5):4237--4284, 2016.

\bibitem[CK03]{ChKu03}
Z.-Q. Chen and T.~Kumagai.
\newblock Heat kernel estimates for stable-like processes on {$d$}-sets.
\newblock {\em Stochastic Process. Appl.}, 108(1):27--62, 2003.

\bibitem[CKW19]{CKW19}
Z.-Q. Chen, T.~Kumagai, and J.~Wang.
\newblock Elliptic {H}arnack inequalities for symmetric non-local {D}irichlet
  forms.
\newblock {\em J. Math. Pures Appl. (9)}, 125:1--42, 2019.

\bibitem[CKW20]{CKW20}
Z.-Q. Chen, T.~Kumagai, and J.~Wang.
\newblock Stability of parabolic {H}arnack inequalities for symmetric non-local
  {D}irichlet forms.
\newblock {\em J. Eur. Math. Soc.}, 22(11):3747--3803, 2020.

\bibitem[CKW23]{CKW23}
J. Chaker, M.~Kim, and M.~Weidner.
\newblock Harnack inequality for nonlocal problems with non-standard growth.
\newblock {\em Math. Ann.}, 386:533--550, 2023.

\bibitem[CZ18]{ChZh18}
Z.-Q. Chen and X.~Zhang.
\newblock {$L^p$}-maximal hypoelliptic regularity of nonlocal kinetic
  {F}okker-{P}lanck operators.
\newblock {\em J. Math. Pures Appl. (9)}, 116:52--87, 2018.

\bibitem[Coz17]{Coz17}
M.~Cozzi.
\newblock Regularity results and {H}arnack inequalities for minimizers and
  solutions of nonlocal problems: a unified approach via fractional {D}e
  {G}iorgi classes.
\newblock {\em J. Funct. Anal.}, 272(11):4762--4837, 2017.

\bibitem[GIMV19]{GIMV19}
F.~Golse, C.~Imbert, C.~Mouhot, and A.~Vasseur.
\newblock Harnack inequality for kinetic {F}okker-{P}lanck equations with rough
  coefficients and application to the {L}andau equation.
\newblock {\em Ann. Sc. Norm. Super. Pisa Cl. Sci. (5)}, 19(1):253--295, 2019.

\bibitem[GM22]{GuMo22}
J.~Guerand and C.~Mouhot.
\newblock Quantitative {D}e {G}iorgi methods in kinetic theory.
\newblock {\em J. \'{E}c. polytech. Math.}, 9:1159--1181, 2022.

\bibitem[HPZ21]{HPZ21}
Z.~Hao, X.~Peng, and X.~Zhang.
\newblock H\"{o}rmander's hypoelliptic theorem for nonlocal operators.
\newblock {\em J. Theoret. Probab.}, 34(4):1870--1916, 2021.

\bibitem[HWZ20]{HWZ20}
Z.~Hao, M.~Wu, and X.~Zhang.
\newblock Schauder estimates for nonlocal kinetic equations and applications.
\newblock {\em J. Math. Pures Appl. (9)}, 140:139--184, 2020.

\bibitem[HM16]{HuMe16}
L.~Huang and S.~Menozzi.
\newblock A parametrix approach for some degenerate stable driven {SDE}s.
\newblock {\em Ann. Inst. Henri Poincar\'{e} Probab. Stat.}, 52(4):1925--1975,
  2016.

\bibitem[HMP19]{HMP19}
L.~Huang, S.~Menozzi, and E.~Priola.
\newblock {$L^p$} estimates for degenerate non-local {K}olmogorov operators.
\newblock {\em J. Math. Pures Appl. (9)}, 121:162--215, 2019.

\bibitem[IMS20]{IMS20}
C.~Imbert, C.~Mouhot, and L.~Silvestre.
\newblock Decay estimates for large velocities in the {B}oltzmann equation
  without cutoff.
\newblock {\em J. \'{E}c. polytech. Math.}, 7:143--184, 2020.

\bibitem[IS20a]{ImSi20b}
C.~Imbert and L.~Silvestre.
\newblock Regularity for the {B}oltzmann equation conditional to macroscopic
  bounds.
\newblock {\em EMS Surv. Math. Sci.}, 7(1):117--172, 2020.

\bibitem[IS20b]{ImSi20}
C.~Imbert and L.~Silvestre.
\newblock The weak {H}arnack inequality for the {B}oltzmann equation without
  cut-off.
\newblock {\em J. Eur. Math. Soc.}, 22(2):507--592, 2020.

\bibitem[IS21]{ImSi21}
C.~Imbert and L.~Silvestre.
\newblock The {S}chauder estimate for kinetic integral equations.
\newblock {\em Anal. PDE}, 14(1):171--204, 2021.

\bibitem[IS22]{ImSi22}
C.~Imbert and L.~Silvestre.
\newblock Global regularity estimates for the {B}oltzmann equation without
  cut-off.
\newblock {\em J. Amer. Math. Soc.}, 35(3):625--703, 2022.

\bibitem[Kas07]{Kas07}
M.~Kassmann.
\newblock The classical {H}arnack inequality fails for non-local operators.
\newblock {\em Preprint No. 360, Sonderforschungsbereich 611, Universit{\"a}t
  Bonn}, 2007.

\bibitem[Kas11]{Kas11}
M.~Kassmann.
\newblock A new formulation of {H}arnack's inequality for nonlocal operators.
\newblock {\em C. R. Math. Acad. Sci. Paris}, 349(11-12):637--640, 2011.

\bibitem[KW24]{KaWe24}
M.~Kassmann and M.~Weidner.
\newblock Nonlocal operators related to nonsymmetric forms {II}: {H}arnack
  inequalities.
\newblock {\em Anal. PDE}, 17(9):3189--3249, 2024.

\bibitem[KW23]{KaWe23}
M.~Kassmann and M.~Weidner.
\newblock The parabolic {H}arnack inequality for nonlocal equations.
\newblock {\em arXiv:2303.05975, Duke Math J., accepted for publication}, 2023.

\bibitem[Kit23]{Kit23}
S.~Kitano.
\newblock Harnack inequalities and {H}\"{o}lder estimates for fully nonlinear
  integro-differential equations with weak scaling conditions.
\newblock {\em J. Differential Equations}, 376:714--749, 2023.

\bibitem[KP16]{KoPo16}
A.~Kogoj and S.~Polidoro.
\newblock Harnack inequality for hypoelliptic second order partial differential
  operators.
\newblock {\em Potential Anal.}, 45(3):545--555, 2016.

\bibitem[LP94]{LaPo94}
E.~Lanconelli and S.~Polidoro.
\newblock On a class of hypoelliptic evolution operators.
\newblock volume~52, pages 29--63. 1994.
\newblock Partial differential equations, II (Turin, 1993).

\bibitem[Lan72]{Lan71}
N.~Landkof.
\newblock {\em Foundations of modern potential theory}, volume Band 180 of {\em
  Die Grundlehren der mathematischen Wissenschaften}.
\newblock Springer-Verlag, New York-Heidelberg, 1972.
\newblock Translated from the Russian by A. P. Doohovskoy.

\bibitem[Loh23]{Loh23}
A.~Loher.
\newblock Quantitative {S}chauder estimates for hypoelliptic equations.
\newblock {\em arXiv:2305.00463}, 2023.

%\bibitem[Loh24a-v2]{Loh24a-v2}
%A.~Loher.
%\newblock Local behaviour of non-local hypoelliptic equations: divergence form.
%\newblock {\em arXiv:2404.05612v2}, 2024.

\bibitem[Loh24a]{Loh24a}
A.~Loher.
\newblock Semi-local behaviour of non-local hypoelliptic equations: divergence form.
\newblock {\em arXiv:2404.05612v3}, 2024.

\bibitem[Loh24b]{Loh24b}
A.~Loher.
\newblock Quantitative {D}e {G}iorgi methods in kinetic theory for non-local
  operators.
\newblock {\em J. Funct. Anal.}, 286(6):Paper No. 110312, 67, 2024.

\bibitem[MM24]{MaMu22}
J.~Malmquist and M.~Murugan.
\newblock Counterexamples to elliptic {H}arnack inequality for isotropic
  unimodal {L}{\'e}vy processes.
\newblock {\em arXiv:2209.09778}, 2024.

\bibitem[Mou18]{Mou18}
C.~Mouhot.
\newblock De {G}iorgi--{N}ash--{M}oser and {H}\"{o}rmander theories: new
  interplays.
\newblock In {\em Proceedings of the {I}nternational {C}ongress of
  {M}athematicians---{R}io de {J}aneiro 2018. {V}ol. {III}. {I}nvited
  lectures}, pages 2467--2493. World Sci. Publ., Hackensack, NJ, 2018.

\bibitem[Nie22]{Nie22}
L.~Niebel.
\newblock Kinetic maximal {$L_\mu^p(L^p)$}-regularity for the fractional
  {K}olmogorov equation with variable density.
\newblock {\em Nonlinear Anal.}, 214:Paper No. 112517, 21, 2022.

\bibitem[NZ21]{NiZa21}
L.~Niebel and R.~Zacher.
\newblock Kinetic maximal {$L^2$}-regularity for the (fractional) {K}olmogorov
  equation.
\newblock {\em J. Evol. Equ.}, 21(3):3585--3612, 2021.

\bibitem[NZ22]{NiZa22}
L.~Niebel and R.~Zacher.
\newblock Kinetic maximal {$L^p$}-regularity with temporal weights and
  application to quasilinear kinetic diffusion equations.
\newblock {\em J. Differential Equations}, 307:29--82, 2022.

\bibitem[PP04]{PaPo04}
A.~Pascucci and S.~Polidoro.
\newblock On the {H}arnack inequality for a class of hypoelliptic evolution
  equations.
\newblock {\em Trans. Amer. Math. Soc.}, 356(11):4383--4394, 2004.

\bibitem[Rie38]{Rie38}
M.~Riesz.
\newblock Int\'egrales de {R}iemann-{L}iouville et potentiels.
\newblock {\em Acta Litt. Sci. Szeged}, 9:1--42, 1938.

\bibitem[Sil16]{Sil16}
L.~Silvestre.
\newblock A new regularization mechanism for the {B}oltzmann equation without
  cut-off.
\newblock {\em Comm. Math. Phys.}, 348(1):69--100, 2016.

\bibitem[Sto19]{Sto19}
L.~Stokols.
\newblock H\"{o}lder continuity for a family of nonlocal hypoelliptic kinetic
  equations.
\newblock {\em SIAM J. Math. Anal.}, 51(6):4815--4847, 2019.

\bibitem[Str19]{Str19}
M.~Str\"{o}mqvist.
\newblock Harnack's inequality for parabolic nonlocal equations.
\newblock {\em Ann. Inst. H. Poincar\'{e} C Anal. Non Lin\'{e}aire},
  36(6):1709--1745, 2019.

\end{thebibliography}
\end{document}